\documentclass{amsart}

\textwidth	480pt
\textheight	640pt
\oddsidemargin	-10pt
\evensidemargin -10pt
\topmargin	-10pt

\newcommand{\F}{\mathcal F}
\newcommand{\K}{\mathcal K}
\newcommand{\IR}{\mathbb R}
\newcommand{\IN}{\mathbb N}

\newcommand{\w}{\omega}
\newcommand{\osc}{\mathrm{osc}}
\newcommand{\dec}{\mathrm{dec}}
\newcommand{\ddec}{\mathrm{ddec}}
\newcommand{\Dec}{\mathrm{dec}}

\newtheorem{theorem}{Theorem}
\newtheorem{proposition}{Proposition}
\newtheorem{corollary}{Corollary}
\newtheorem{problem}{Problem}

\title{On $\sigma$-convex subsets in spaces of scatteredly continuous functions}%
\author{Taras Banakh, Bogdan Bokalo, Nadiya Kolos}
\address{Ivan Franko National University of Lviv}%
\email{\small tbanakh@yahoo.com, bogdanbokalo@mail.ru,
nadiya\_kolos@ukr.net}
\subjclass{46A55, 46E99, 54C35}
\keywords{scatteredly continuous map, weakly discontinuous map,
$\sigma$-convex subset, network weight}

\begin{document}
\begin{abstract}{We prove that for any topological space $X$ of countable tightness,
each $\sigma$-convex subspace $\F$ of the space $SC_p(X)$ of
scatteredly continuous real-valued functions on $X$ has network
weight $nw(\F)\le nw(X)$. This implies that for a metrizable
separable space $X$, each compact convex subset in the function
space $SC_p(X)$ is metrizable. Another corollary says that two
Tychonoff spaces $X,Y$ with countable tightness and topologically
isomorphic linear topological spaces $SC_p(X)$ and $SC_p(Y)$ have
the same network weight $nw(X)=nw(Y)$. Also we prove that each zero-dimensional separable Rosenthal compact space is homeomorphic to a compact subset of the function space $SC_p(\w^\w)$ over the space $\w^\w$ of irrationals.}
\end{abstract}
\maketitle


This paper was motivated by the problem of studying the
linear-to\-po\-lo\-gi\-cal structure of the space  $SC_p(X)$ of
scatteredly continuous real-valued functions on a topological space
$X$, addressed in \cite{BK1,BK2}.

A function $f:X\to Y$ between two topological spaces is called {\em
scatteredly continuous} if for each non-empty subspace $A\subset X$
the restriction $f|A:A\to Y$ has a point of continuity. Scatteredly
continuous functions were introduced in \cite{AB} (as almost
continuous functions) and studied in details in \cite{BM}, \cite{BB}
and \cite{BK3}. If a topological space $Y$ is regular, then the
scattered continuity of a function $f:X\to Y$ is equivalent to the
weak discontinuity of $f$; see \cite{AB}, \cite[4.4]{BB}. We recall
that a function $f:X\to Y$ is {\em weakly discontinuous} if each
subspace $A\subset X$ contains an open dense subspace $U\subset A$
such that the restriction $f|U:U\to Y$ is continuous.

For a topological space $X$ by $SC_p(X)\subset\IR^X$ we denote the
linear space of all scatteredly continuous (equivalently, weakly
discontinuous) functions on $X$, endowed with the topology of
pointwise convergence. It is clear that the space $SC_p(X)$ contains
the linear subspace $C_p(X)$ of all continuous real-valued functions
on $X$. Topological properties of the function spaces $C_p(X)$ were
intensively studied by topologists, see \cite{Arh}. In particular,
they studied the interplay between topological invariants of
topological space $X$ and its function space $C_p(X)$.

Let us recall \cite{En,Juh} that for a topological space $X$ its
\begin{itemize}
\item {\em weight} $w(X)$ is the smallest cardinality of a base of the topology of $X$;
\item {\em network weight} $w(X)$ is the smallest cardinality of a network of the topology of $X$;
\item {\em tightness} $t(X)$ is the smallest infinite cardinal $\kappa$ such that for each subset $A\subset X$ and a point $a\in \bar A$ in its closure there is a subset $B\subset A$ of cardinality $|B|\le\kappa$ such that $a\in \bar B$;
\item {\em Lindel\"of number} $l(X)$ is the smallest infinite cardinal $\kappa$ such that each open cover of $X$ has a subcover of cardinality $\le\kappa$;
\item {\em hereditary Lindel\"of number} $hl(X)=\sup\{l(Z):Z\subset X\}$;
\item {\em density $d(X)$} if the smallest cardinality of a dense subset of $X$;
\item {\em the hereditary density} $hd(X)=\sup\{d(Z):Z\subset X\}$;
\item {\em spread} $s(X)=\sup\{|D|:D$ is a discrete subspace of $X\}$.
\end{itemize}

By \cite[\S I.1]{Arh}, for each Tychonoff space $X$ the function
space $C_p(X)$ has weight $w(C_p(X))=|X|$ and network weight
$nw(SC_p(X))=nw(X)$. For the function space $SC_p(X)$ the situation
is a bit different.

\begin{proposition} For any $T_1$-space $X$ we have $$s\big(SC_p(X)\big)=nw\big(SC_p(X)\big)=w\big(SC_p(X)\big)=|X|.$$
\end{proposition}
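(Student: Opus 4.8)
The strategy is to squeeze all three cardinals between $|X|$ and $|X|$. For an arbitrary topological space $Y$ one always has $s(Y)\le nw(Y)\le w(Y)$: every base is a network, and if $\mathcal N$ is a network and $D\subseteq Y$ is discrete, then picking for each $d\in D$ a set $N_d\in\mathcal N$ with $d\in N_d$ and $N_d\cap D=\{d\}$ yields an injection $D\hookrightarrow\mathcal N$, whence $|D|\le|\mathcal N|$. Applying this with $Y=SC_p(X)$, it remains only to prove the two extreme bounds $w(SC_p(X))\le|X|$ and $|X|\le s(SC_p(X))$. (We assume $X$ is infinite; for finite $X$ the equality is to be read with the usual convention that these invariants are $\ge\aleph_0$.)

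The upper bound is immediate: $SC_p(X)$ is by definition a subspace of the Tychonoff product $\IR^X$, and since $\IR$ is second countable and $X$ is infinite, $w(SC_p(X))\le w(\IR^X)\le|X|$.

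The lower bound is where the $T_1$ hypothesis enters, and I expect this to be the only point requiring care. For each $x\in X$ let $\chi_{\{x\}}\colon X\to\IR$ be the characteristic function of the singleton $\{x\}$, which is closed in $X$ because $X$ is $T_1$. First I would verify that $\chi_{\{x\}}\in SC_p(X)$, i.e.\ that $\chi_{\{x\}}$ is scatteredly continuous: for a non-empty $A\subseteq X$, if $x\notin A$ then $\chi_{\{x\}}|A\equiv0$; if $A=\{x\}$ then $\chi_{\{x\}}|A$ is constant; and if $x\in A$ with $A\neq\{x\}$, then $A\setminus\{x\}$ is a non-empty subset of $A$ that is open in $A$ (as $\{x\}$ is closed) and on which $\chi_{\{x\}}$ vanishes, so $\chi_{\{x\}}|A$ is continuous at each of its points; thus in all cases $\chi_{\{x\}}|A$ has a continuity point. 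Next I would note that $D:=\{\chi_{\{x\}}:x\in X\}$ is a discrete subspace of $SC_p(X)$ of cardinality $|X|$: the map $x\mapsto\chi_{\{x\}}$ is injective, and for fixed $x_0\in X$ the set $W_{x_0}:=\{f\in SC_p(X):f(x_0)>1/2\}$ is open (being the preimage of $(1/2,\infty)$ under the $x_0$-th coordinate projection) and $W_{x_0}\cap D=\{\chi_{\{x_0\}}\}$, since $\chi_{\{x_0\}}(x_0)=1$ while $\chi_{\{x\}}(x_0)=0$ for $x\neq x_0$. Hence $s(SC_p(X))\ge|X|$.

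Putting the pieces together gives
$$|X|\le s(SC_p(X))\le nw(SC_p(X))\le w(SC_p(X))\le|X|,$$
so all four cardinals equal $|X|$. The main obstacle is essentially the lemma-like observation that characteristic functions of singletons lie in $SC_p(X)$ — precisely the step where $T_1$ is needed; everything else is routine bookkeeping with cardinal invariants and the product topology.
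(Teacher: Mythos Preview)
Your proof is correct and follows essentially the same approach as the paper: squeeze via $s\le nw\le w\le w(\IR^X)=|X|$, then exhibit the discrete set $\{\chi_{\{x\}}:x\in X\}\subset SC_p(X)$ of cardinality $|X|$ to get the reverse inequality. You supply more detail than the paper (in particular the verification that $\chi_{\{x\}}$ is scatteredly continuous and that the family is discrete), but the strategy is identical.
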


\begin{proof} It is clear that $s\big(SC_p(X)\big)\le nw\big(SC_p(X)\big)\le w\big(SC_p(X)\big)\le w(\IR^X)=|X|$.
To  see that $|X|\le s\big(SC_p(X)\big)$, observe that for each point $a\in X$ the characteristic function
$$\delta_a:X\to\IR=\begin{cases}1,&\mbox{if $x=a$}\\
0,&\mbox{otherwise}
\end{cases}
$$
of the singleton $\{a\}$ is scatteredly continuous, and the subspace
$\mathcal D=\{\delta_a:a\in X\}\subset SC_p(X)$ has cardinality
$|X|$ and is discrete in $SC_p(X)$.
\end{proof}

The deviation of a subset $\F\subset SC_p(X)$ from being a subset of
$C_p(X)$ can be measured with help of the cardinal number $\Dec(\F)$
called the {\em decomposition number} of $\F$. It is defined as the
smallest cardinality $|\mathcal{C}|$ of a cover $\mathcal C$ of $X$
such that for each $C\in\mathcal{C}$ and $f\in\F$ the restriction
$f|C$ is continuous. If the function family $\F$ consists of a
single function $f$, then the decomposition number
$\Dec(\F)=\Dec(\{f\})$ coincides with the decomposition number
$\dec(f)$ of the function $f$, studied in \cite{Sol}. It is clear
that $\Dec\big(C_p(X)\big)=1$.

\begin{proposition} For a $T_1$ topological space $X$ the decomposition
number $\Dec(SC_p(X))$ is equal to the decomposition number
$\Dec(\mathcal D)$ of the subset $\mathcal D=\{\delta_a:a\in X\}\subset
 SC_p(X)$ and is equal to the smallest cardinality $\ddec(X)$ of a
 cover of $X$ by discrete subspaces.
\end{proposition}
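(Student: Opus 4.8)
The plan is to establish the chain of inequalities
$$\ddec(X)\le\Dec(\mathcal D)\le\Dec\big(SC_p(X)\big)\le\ddec(X),$$
which immediately yields the equality of all three cardinals. The middle inequality comes for free: since $\mathcal D\subset SC_p(X)$, every cover $\mathcal C$ of $X$ with the property that $f|C$ is continuous for all $C\in\mathcal C$ and all $f\in SC_p(X)$ is in particular such a cover for the subfamily $\mathcal D$. Thus the infimum defining $\Dec(\mathcal D)$ is taken over a larger collection of covers than the one defining $\Dec\big(SC_p(X)\big)$, whence $\Dec(\mathcal D)\le\Dec\big(SC_p(X)\big)$.

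For the inequality $\Dec\big(SC_p(X)\big)\le\ddec(X)$ I would fix a cover $\mathcal C$ of $X$ by discrete subspaces with $|\mathcal C|=\ddec(X)$. Every real-valued function on a discrete space is continuous, so for each $C\in\mathcal C$ and each $f\in SC_p(X)$ the restriction $f|C$ is continuous; hence the same cover $\mathcal C$ witnesses $\Dec\big(SC_p(X)\big)\le|\mathcal C|=\ddec(X)$.

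The inequality $\ddec(X)\le\Dec(\mathcal D)$ carries the only idea of the proof. Fix a cover $\mathcal C$ of $X$ with $|\mathcal C|=\Dec(\mathcal D)$ such that $\delta_a|C$ is continuous for every $a\in X$ and every $C\in\mathcal C$. I claim that each $C\in\mathcal C$ is a discrete subspace of $X$. Indeed, given any point $c\in C$, the restriction $\delta_c|C$ is continuous, and since $\delta_c$ attains the value $1$ only at the point $c$, the preimage $(\delta_c|C)^{-1}\big((1/2,3/2)\big)$ equals the singleton $\{c\}$, which is therefore open in $C$. As $c\in C$ was arbitrary, $C$ is discrete, so $\mathcal C$ is a cover of $X$ by discrete subspaces and $\ddec(X)\le|\mathcal C|=\Dec(\mathcal D)$.

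Combining the three inequalities gives $\ddec(X)=\Dec(\mathcal D)=\Dec\big(SC_p(X)\big)$. I do not foresee any genuine obstacle: the argument is elementary, and the single point to notice is that the characteristic function $\delta_c$ of a point $c\in C$ detects that $\{c\}$ is relatively open in any set $C$ on which $\delta_c$ restricts continuously — which is exactly the assertion that such a $C$ must be discrete.
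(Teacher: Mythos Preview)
Your proof is correct and follows essentially the same approach as the paper: the chain of inequalities $\Dec(\mathcal D)\le\Dec(SC_p(X))\le\ddec(X)$ is immediate, and the substantive step---showing that any cover witnessing $\Dec(\mathcal D)$ must consist of discrete subspaces---is exactly what the paper does. The only cosmetic difference is that the paper argues the discreteness of each $C$ by contradiction (a non-isolated point $c\in C$ would make $\delta_c|C$ discontinuous), whereas you give the equivalent direct argument via the open preimage $(\delta_c|C)^{-1}\big((1/2,3/2)\big)=\{c\}$.
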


\begin{proof} It is clear that $\Dec(\mathcal D)\le\Dec(SC_p(X))\le \ddec(X)$.
To prove that $\Dec(\mathcal D)\ge\ddec(X)$, take a cover
$\mathcal{C}$ of $X$ of cardinality $|\mathcal{C}|=\Dec(\mathcal D)$
such that for each $C\in\mathcal{C}$ and each characteristic
function $\delta_a\in\mathcal D$ the restriction $\delta_a|C$ is
continuous. We claim that each space $C\in\mathcal{C}$ is discrete.
Assuming conversely that $C$ contains a non-isolated point $c\in C$,
observe that for the characteristic function $\delta_c$ of the
singleton $\{c\}$ the restriction $\delta_c|C$ is not continuous.
But this contradicts the choice of the cover $\mathcal{C}$.
Therefore the cover $\mathcal{C}$ consists of discrete subspaces of
$X$ and $\ddec(X)\le|\mathcal{C}|=\Dec(\mathcal D)$.
\end{proof}

In contrast to the whole function space $SC_p(X)$ which has large
decomposition number $\Dec(SC_p(X))$, its $\sigma$-convex subsets
have decomposition numbers bounded from above by the hereditary
Lindel\"of number of $X$.

Following \cite{Al} and \cite{TUZ}, we define a subset $C$ of a
linear topological space $L$ to be  {\em $\sigma$-convex} if
for any sequence of points $(x_n)_{n\in\w}$ in $C$ and any sequence
of positive real numbers $(t_n)_{n\in\w}$ with
$\displaystyle\sum_{n=0}^\infty t_n=1$ the series
$\displaystyle\sum_{n=0}^\infty t_nx_n$ converges to some point
$c\in C$. It is easy to see that each compact convex subset
$K\subset L$ is $\sigma$-convex. On the other hand, each
$\sigma$-convex subset of a linear topological space $L$ is
necessarily convex and bounded in $L$.

The main result of this paper is the following:
\begin{theorem}\label{main} For any topological space $X$ of countable
tightness, each $\sigma$-convex subset $\F\subset SC_p(X)$ has decomposition number $\Dec(\F)\le hl(X)$.
\end{theorem}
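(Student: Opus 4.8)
My plan is to isolate a localized statement and then recover the global bound by a transfinite ``peeling'' argument. The local statement — call it the \emph{Main Lemma} — is: \emph{if $X$ is a nonempty space of countable tightness and $\F\subset SC_p(X)$ is a nonempty $\sigma$-convex set, then $X$ has a nonempty open subset $V$ such that $f|V$ is continuous for every $f\in\F$.} Granting this, the Theorem follows. For any subspace $Y\subset X$ the space $Y$ again has countable tightness, and the restriction operator $SC_p(X)\to SC_p(Y)$, $f\mapsto f|Y$, is linear and continuous, so $\F|Y$ is a nonempty $\sigma$-convex subset of $SC_p(Y)$; applying the Main Lemma to every nonempty relatively open subset of $Y$ shows that the union $G_Y$ of all relatively open ``good'' subsets of $Y$ (those on which all $f\in\F$ are continuous) is itself relatively open, good, and dense in $Y$. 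Then I build a decreasing transfinite chain of closed sets $X_0=X$, $X_{\alpha+1}=X_\alpha\setminus G_{X_\alpha}$, $X_\lambda=\bigcap_{\beta<\lambda}X_\beta$; density of $G_{X_\alpha}$ forces $X_{\alpha+1}\subsetneq X_\alpha$ whenever $X_\alpha\ne\emptyset$, so $X_\theta=\emptyset$ for some ordinal $\theta$, and $\{G_{X_\alpha}:\alpha<\theta\}$ covers $X$ by good subspaces, giving $\Dec(\F)\le|\theta|$. Finally the complements $X\setminus X_\alpha$ form a strictly increasing chain of open sets, so choosing $x_\alpha\in X_\alpha\setminus X_{\alpha+1}$ yields a right-separated subspace $\{x_\alpha:\alpha<\theta\}$; since a right-separated subspace of cardinality $\kappa^+$ has Lindel\"of number $\kappa^+$, this forces $|\theta|\le hl(X)$ and hence $\Dec(\F)\le hl(X)$.

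To prove the Main Lemma I argue by contradiction: assume every nonempty open subset of $X$ is \emph{bad}, meaning it contains a point of discontinuity of some member of $\F$. I will choose a sequence $(f_n)_{n\in\w}$ in $\F$, a countable set $A=\{a_n:n\in\w\}\subset X$, and positive reals $(t_n)_{n\in\w}$ with $\sum_n t_n=1$, so that $g=\sum_n t_n f_n$ — which lies in $\F\subset SC_p(X)$ by $\sigma$-convexity — is discontinuous at every point of $A$; then $g|A$ has no point of continuity, contradicting the scattered continuity of $g$. The mechanism producing a discontinuity at $a\in A$ is elementary: if $(y_\nu)\to a$ is a net in $A$ and for a single index $m$ one has $f_m(y_\nu)\ge f_m(a)+\varepsilon$ for all $\nu$, while $f_n(y_\nu)\ge f_n(a)-\eta_n$ for all $n\ne m$ and $\nu$ with $\sum_{n\ne m}t_n\eta_n<\tfrac12 t_m\varepsilon$, then comparing the series $\sum_n t_n f_n(y_\nu)$ and $\sum_n t_n f_n(a)$ termwise gives $g(y_\nu)\ge g(a)+\tfrac12 t_m\varepsilon$, so $g|A$ is discontinuous at $a$. (Here one uses that $\sigma$-convexity forces $\sum_n t_n|f_n(y)|<\infty$ for every $y$, obtained by splitting the terms according to sign and rescaling the weights, so that the series involved converge absolutely and tails are negligible.)

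The construction of $A$ runs along a countably branching tree $T\subset\w^{<\w}$, assigning to each $s\in T$ a point $x_s$, a function $f_s\in\F$, a sign $\sigma_s\in\{-1,1\}$, a positive rational $\varepsilon_s$, and a nonempty open set $V_s\ni x_s$ with $V_{s^\frown n}\subset V_s$, so that: (a) $x_s$ lies in the closure of $\{x_{s^\frown n}:n\in\w\}$; (b) $\sigma_s f_s(x_t)\ge\sigma_s f_s(x_s)+\varepsilon_s$ for every $t\supsetneq s$ in $T$; (c) each $x_s$ is a continuity point of every $f_r$ chosen before $s$; and (d) the values of the functions chosen before $s$ stay as close as prescribed to their values at $x_s$ when one passes to the successors $x_{s^\frown n}$. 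Badness of open sets is used at each step: inside the (bad, nonempty, open) intersection of $V_s$ with the dense open interiors of the continuity-point sets of the previously chosen functions one finds a point at which some $f_s\in\F$ has positive oscillation; passing to one side of this oscillation and using the upper (or lower) semicontinuous regularization of $\sigma_s f_s$ produces $\varepsilon_s$ and an \emph{open} region on which $\sigma_s f_s$ exceeds $\sigma_s f_s(x_s)+\varepsilon_s$, and the successors are picked in this region. Countable tightness enters precisely here, to keep $T$ countably branching: $x_s$ lies in the closure of that open region, hence — by countable tightness — in the closure of a \emph{countable} subset of it, and those countably many points, after being shrunk to meet (c) and (d), serve as the successors $x_{s^\frown n}$. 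Enumerating $T$ as $(s_n)$, putting $f_n=f_{s_n}$, and choosing the weights $(t_n)$ \emph{last} — after all the $f_n$ and $x_s$ are fixed, using the pointwise boundedness of $\F$ together with (c), (d) to make $\sum_{n\ne m}t_n\eta_n$ small against $t_m\varepsilon_{s_m}$ at each relevant point $x_{s_m}$ — one obtains that $g=\sum_n t_n f_n$ is discontinuous at every $x_s\in A$, completing the contradiction.

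The genuine obstacle is the simultaneous bookkeeping in this construction. The sets $\{\sigma f\ge c\}$ are not open (each $f\in\F$ is truly discontinuous), so keeping a successor inside all the regions attached to its finitely many ancestors forces one to place it at a common continuity point of those ancestors' functions and to exploit that each continuity-point set contains a dense open set; at the same time one must control — uniformly over the \emph{infinitely many later-chosen} functions, whose values near a previously placed point may be unbounded — how far $f_n(x_{s_m^\frown k})$ can dip below $f_n(x_{s_m})$, so that the tails $\sum_{n>m}t_n\eta_n$ can be absorbed into $t_m\varepsilon_{s_m}$ by a fast enough choice of weights. Reconciling these two demands (for instance by also requiring at each stage that the successors lie in a suitable sublevel set of the semicontinuous regularizations of the already-chosen functions, and by letting the weights decay faster than any sequence depending on the data constructed so far) is the technical heart of the proof, and is where I expect the real work to lie.
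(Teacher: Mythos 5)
Your outer reduction is sound: granting your Main Lemma (applied to every subspace of $X$, which is exactly the assertion that $\F$ is a weakly discontinuous family), the transfinite peeling plus the right-separated-subspace bound does yield $\Dec(\F)\le hl(X)$; this part is in effect a self-contained reproof of the paper's Corollary~\ref{c5}, which is quoted from \cite{BB}. The gap is in the Main Lemma itself, whose proof you explicitly defer (``the technical heart \dots\ where I expect the real work to lie''), and the deferred part is not routine --- two of the sketched steps fail as stated. First, the tail control. You propose to choose the weights last, using only pointwise boundedness of $\F$. But at a node $s_m$ you need $\sum_{k\ne m}t_k\,|f_k(x_{s_m^\frown n})-f_k(x_{s_m})|\le\tfrac12 t_m\varepsilon_{s_m}$ \emph{uniformly over the infinitely many successors} $n$; for the functions $f_k$ chosen after stage $m$ this requires a bound on $\sup_n|f_k(x_{s_m^\frown n})|$, and pointwise boundedness gives no uniform bound over an infinite set of points --- that supremum can be $+\infty$, in which case no positive weight $t_k$ works. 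This is exactly why the paper first partitions $X$ into the sets $X_n=\{x\in X: n\le\sup_{f\in\F}|f(x)|<n+1\}$ (pointwise boundedness being forced by $\sigma$-convexity), proves the key statement only for $\sigma$-convex subsets of $SC_p^*(X_n)$, where honest sup-norms $\|f\|$ are available for the tail estimates, and then sums the decompositions over $n$. Your proposal omits this reduction, and without it I do not see how to close your construction. Second, a smaller but genuine error: the open set where the upper semicontinuous regularization of $\sigma_sf_s$ exceeds $\sigma_sf_s(x_s)+\varepsilon_s$ is \emph{not} a region on which $\sigma_sf_s$ itself exceeds that threshold (the regularization dominates the function, not conversely). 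So the successors must be drawn from a non-open superlevel set; since the descendants of $x_{s^\frown n}$ must remain in that same set, $x_{s^\frown n}$ has to be simultaneously in the superlevel set and a continuity point of $f_s$ --- and the superlevel set accumulating at $x_s$ may consist entirely of discontinuity points of $f_s$, so these two demands can be irreconcilable.

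It is also worth noting that the paper's mechanism is substantially lighter than a tree construction. Using countable tightness (via Proposition~2.3 of \cite{BB}) it reduces scattered continuity of $\Delta\F$ to the claim that $\Delta\F|Q$ has a continuity point for every \emph{countable} $Q=\{x_n\}_{n\ge1}$. Assuming not, it picks $f_n\in\F$ with $f_n|Q$ discontinuous at $x_n$ and builds a single series $s=\sum_n t_nf_n\in\F$ whose partial sums $s_n|Q$ stay discontinuous at $x_n$ (the halving trick: if adding $\tilde t_{n+1}f_{n+1}$ restores continuity at $x_{n+1}$, add $\tfrac12\tilde t_{n+1}f_{n+1}$ instead, since otherwise $f_{n+1}|Q$ would be continuous there) and whose tails satisfy $\|u_n\|\le\tfrac14\osc_{x_n}(s_n|Q)$. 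The contradiction is then extracted at a \emph{single} point $x_n$ lying in the dense open subset of $Q$ on which $s$ and $f_0$ are continuous --- there is no need for the constructed function to be discontinuous at every point of a prescribed countable set, hence no tree, no signs, and no bookkeeping over descendants. I would recommend adopting that route: prove your Main Lemma first for $\sigma$-convex $\F\subset SC_p^*(X)$ by the oscillation argument, then reduce the general case via the partition $X=\bigcup_nX_n$.
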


This theorem will be proved in Section~\ref{s:pf-t2}. Now we derive
some simple corollaries of this theorem.

\begin{corollary}\label{c1} For any topological space $X$ of countable
tightness, each $\sigma$-convex subset $\F\subset SC_p(X)$ has network
weight $nw(\F)\le nw(X)$. Moreover, $$nw(X)=\max\{nw(\F):\mbox{$\F$ is
a $\sigma$-convex subset of $SC_p(X)$}\}$$provided the space $X$ is Tychonoff.
\end{corollary}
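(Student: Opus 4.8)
The plan is to establish the inequality $nw(\F)\le nw(X)$ for an arbitrary $\sigma$-convex subset $\F\subset SC_p(X)$, and then to exhibit one particular $\sigma$-convex subset of $SC_p(X)$ whose network weight is exactly $nw(X)$, so that the supremum in the statement is attained. (We use the usual convention that network weight is an infinite cardinal.)

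For the inequality I would apply Theorem~\ref{main}. Since $hl(X)\le nw(X)$ (because $l(Z)\le nw(Z)$ for every space $Z$), the theorem supplies a cover $\mathcal C$ of $X$ with $|\mathcal C|\le nw(X)$ such that $f|C$ is continuous for all $f\in\F$ and $C\in\mathcal C$. As $\bigcup\mathcal C=X$ and $\F$ carries the topology of pointwise convergence, the restriction map $r\colon\F\to\prod_{C\in\mathcal C}C_p(C)$, $r(f)=(f|C)_{C\in\mathcal C}$, is a topological embedding: it is injective since $\bigcup\mathcal C=X$, it is continuous since each coordinate $f\mapsto f|C$ is continuous into $C_p(C)$, and it is open onto its image since for every $x\in X$ the evaluation $f\mapsto f(x)$ equals the composition of $r$ with the projection onto a coordinate $C_p(C)$ for some $C\ni x$ followed by the evaluation at $x$. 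Now $nw(C_p(C))\le nw(C)\le nw(X)$ for every $C\in\mathcal C$: the first inequality holds for an arbitrary topological space $C$ (it reduces, via the Tychonoff reflection of $C$, to the equality $nw(C_p(Z))=nw(Z)$ for Tychonoff $Z$ from \cite{Arh}), and the second is monotonicity of network weight under subspaces. Since the network weight of a product of at most $nw(X)$ many spaces of network weight at most $nw(X)$ is at most $nw(X)$, we obtain $nw(\F)\le nw(r(\F))\le nw(X)$.

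For the ``moreover'' part assume that $X$ is Tychonoff and put $\F_0:=C_p(X,[0,1])=\{f\in C_p(X):0\le f\le 1\}\subset SC_p(X)$. It is convex and, moreover, $\sigma$-convex: for $f_k\in\F_0$ and $t_k>0$ with $\sum_{k}t_k=1$ the Weierstrass $M$-test (as $\sum_k\|t_kf_k\|_\infty\le\sum_k t_k=1$) shows that $\sum_k t_kf_k$ converges uniformly to a continuous function $g$ with $0\le g\le 1$, so its partial sums converge to $g\in\F_0$ also in the topology of pointwise convergence of $SC_p(X)$. Since $\F_0\subset C_p(X)$, we have $nw(\F_0)\le nw(C_p(X))=nw(X)$ by \cite{Arh}. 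For the reverse inequality the Tychonoff property of $X$ guarantees that the canonical evaluation map $e\colon X\to C_p(\F_0,[0,1])$, $e(x)\colon f\mapsto f(x)$, is a topological embedding, because the $[0,1]$-valued continuous functions on $X$ separate points and closed sets (this gives both injectivity and openness of $e$ onto its image). Therefore $nw(X)\le nw(C_p(\F_0,[0,1]))\le nw(C_p(\F_0))=nw(\F_0)$, the last equality being the theorem of \cite{Arh} for the Tychonoff space $\F_0$. Combining the two estimates gives $nw(\F_0)=nw(X)$, so the supremum in the statement is attained at $\F_0$ and equals $nw(X)$.

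The verifications that $r$ and $e$ are embeddings, together with the arithmetic of network weights of products and subspaces, are routine. The step that needs an idea is that the extremum is a genuine maximum rather than only a supremum, which is why one chooses the concrete set $\F_0=C_p(X,[0,1])$ and pushes its network weight back up to $nw(X)$ through the ``second dual'' embedding $X\hookrightarrow C_p(C_p(X,[0,1]),[0,1])$. The pitfall to keep in mind is that $\F_0$ is in general not compact, since pointwise limits of continuous functions need not be continuous; consequently its $\sigma$-convexity has to be checked directly via uniform convergence and cannot simply be quoted from the implication ``compact convex $\Rightarrow$ $\sigma$-convex''.
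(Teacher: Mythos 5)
Your argument is correct and follows essentially the same route as the paper: Theorem~\ref{main} gives a cover $\mathcal C$ with $|\mathcal C|\le hl(X)\le nw(X)$ on whose members every $f\in\F$ is continuous, and your embedding $\F\hookrightarrow\prod_{C\in\mathcal C}C_p(C)$ is just the paper's identification $\pi^*(\F)\subset C_p(\oplus\mathcal C)=\prod_{C\in\mathcal C}C_p(C)$ in different notation, with the same appeal to $nw(C_p(Z))\le nw(Z)$. For the ``moreover'' part you use $C_p(X,[0,1])$ in place of the paper's unit ball $\mathcal B$ (affinely the same set) and you spell out, via the evaluation embedding $X\hookrightarrow C_p(\F_0,[0,1])$, the lower bound $nw(\F_0)\ge nw(X)$ that the paper simply cites from Arkhangel'skii; this is a welcome amplification but not a different proof.
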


\begin{proof} By Theorem~\ref{main}, each $\sigma$-convex subset
$\F\subset SC_p(X)$ has decomposition number $\Dec(\F)\le hl(X)$.
Consequently, we can find a disjoint cover $\mathcal{C}$ of $X$ of
cardinality $|\mathcal{C}|=\Dec(\F)\le hl(X)$ such that for each
$C\in\mathcal{C}$ and $f\in\F$ the restriction $f|C$ is continuous.

Let $Z=\oplus \mathcal{C}=\{(x,C)\in X\times\mathcal{C}:x\in
C\}\subset X\times \mathcal{C}$ be the topological sum of the family
$\mathcal{C}$, and $\pi:Z\to X$, $\pi:(x,C)\mapsto x$, be the
natural projection of $Z$ onto $X$. Since the cover $\mathcal{C}$ is
disjoint, the map $\pi:Z\to X$ is bijective and hence induces a
topological isomorphism $\pi^*:\IR^X\to \IR^Z$, $\pi^*:f\mapsto
f\circ \pi$. The choice of the cover $\mathcal{C}$ guarantees that
$\pi^*(\F)\subset C_p(Z)$. By (the proof of) Theorem~I.1.3 of
\cite{Arh}, $nw(C_p(Z))\le nw(Z)$ and hence
$$
\begin{aligned}
nw(\F)&=nw(\pi^*(\F))\le nw(C_p(Z))\le nw(Z)\le\\
&\le |\mathcal{C}|\cdot nw(X)\le
hl(X)\cdot nw(X)=nw(X).
\end{aligned}$$

If the space $X$ is Tychonoff, then the ``closed unit ball''
$$\mathcal B=\{f\in C_p(X):\displaystyle\sup_{x\in X}|f(x)|\le
1\}\subset C_p(X)$$ is $\sigma$-convex and has network weight
$nw(\mathcal B)=nw(X)$ according to Theorem I.1.3 of \cite{Arh}. So,
$$nw(X)=\max\{nw(\F):\mbox{$\F$ is a $\sigma$-convex subset of
$SC_p(X)$}\}.$$
\end{proof}

In the same way we can derive some bounds on the weight of compact
convex subsets in function spaces $SC_p(X)$.

\begin{corollary}\label{c2} For any topological space $X$ of countable
tightness, each compact convex subset $\K\subset SC_p(X)$ has
weight $w(\K)\le \max\{hl(X),hd(X)\}$. Moreover,
$$
\begin{aligned}
hl(X)\le \sup\{w(\K):&\mbox{ $\K$ is a compact convex subset of } SC_p(X)\}\le\\
&\le\max\{hl(X),hd(X)\}.
\end{aligned}
$$
\end{corollary}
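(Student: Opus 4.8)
The plan is to obtain the upper bound $w(\K)\le\max\{hl(X),hd(X)\}$ by a direct restriction argument in the spirit of the proof of Corollary~\ref{c1}, and to obtain the lower bound by writing down explicit compact convex subsets of $SC_p(X)$ modelled on right-separated subspaces of $X$. For the upper bound, note that a compact convex set is $\sigma$-convex, so Theorem~\ref{main} gives $\Dec(\K)\le hl(X)$; thus there is a cover $\mathcal C$ of $X$ with $|\mathcal C|\le hl(X)$ such that $f|C$ is continuous for every $C\in\mathcal C$ and every $f\in\K$. For each $C\in\mathcal C$ fix a dense subset $D_C\subseteq C$ with $|D_C|=d(C)\le hd(X)$, and put $D=\bigcup_{C\in\mathcal C}D_C$, so that $|D|\le hl(X)\cdot hd(X)=\max\{hl(X),hd(X)\}$. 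If $f,g\in\K$ agree on $D$, then $f|C$ and $g|C$ are continuous functions agreeing on the dense set $D_C$, hence $f|C=g|C$ for every $C\in\mathcal C$, and therefore $f=g$; so the restriction map $\K\to\IR^D$, $f\mapsto f|D$, is injective. Being a continuous injection of a compactum into the Hausdorff space $\IR^D$, it is a topological embedding, whence $w(\K)\le|D|\le\max\{hl(X),hd(X)\}$. This is the first assertion and the right-hand inequality of the displayed formula.

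For the lower bound I would show that every right-separated subspace $A=\{a_\alpha:\alpha<\lambda\}$ of $X$ gives rise to a compact convex subset of $SC_p(X)$ of weight at least $\lambda$. The key observation is that, since every subspace of a right-separated space is right-separated and a countable right-separated space is scattered, every function $f:X\to\IR$ whose support is a countable subset of $A$ is scatteredly continuous. Indeed, let $B\subseteq X$ be non-empty. If $B\cap\mathrm{supp}(f)$ is not dense in $B$, then $f$ vanishes identically on a non-empty open subset of $B$. If $B\cap\mathrm{supp}(f)$ is dense in $B$, then $B$ cannot be dense-in-itself, since $B\cap\mathrm{supp}(f)$ is a non-empty scattered set and a scattered subspace of a dense-in-itself $T_1$-space is nowhere dense; hence $B$ has an isolated point, at which $f|B$ is automatically continuous. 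In either case $f|B$ has a point of continuity, so $f\in SC_p(X)$.

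Granting this, I would set
$$\K_A=\Big\{f\in\IR^X:\ \mathrm{supp}(f)\subset A,\ f\ge 0,\ \textstyle\sum_{a\in A}f(a)\le 1\Big\}.$$
Every $f\in\K_A$ has countable support contained in $A$, so $\K_A\subseteq SC_p(X)$; moreover $\K_A$ is convex and is a closed subspace of the compact product $\prod_{x\in A}[0,1]\times\prod_{x\in X\setminus A}\{0\}\subset\IR^X$, hence compact; and $\K_A$ contains the characteristic functions $\delta_{a_\alpha}$ ($\alpha<\lambda$), which form a discrete subspace of size $\lambda$, because the elementary open set $\{f:f(a_\alpha)>\tfrac12\}$ separates $\delta_{a_\alpha}$ from every other $\delta_{a_\beta}$. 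Therefore $w(\K_A)\ge s(\K_A)\ge\lambda$.

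To finish, I would note that $\sup\{|A|:A\subseteq X\text{ right-separated}\}\ge hl(X)$: since $hl(X)=\sup_{Y\subseteq X}l(Y)$, for every cardinal $\kappa<hl(X)$ there is a subspace $Y\subseteq X$ admitting an open cover with no subcover of cardinality $\le\kappa$, and the usual transfinite recursion — repeatedly choosing a point of $Y$ outside the union of the cover members already selected, together with a cover member containing it — yields a right-separated subspace of $X$ of cardinality $\kappa^+$. Combined with the previous paragraph this gives $hl(X)\le\sup\{w(\K):\K\text{ is a compact convex subset of }SC_p(X)\}$, which together with the upper bound completes the proof. The delicate step, and the only place where the right-separatedness of $A$ (not merely its cardinality) enters, is the verification that functions with countable support inside $A$ are scatteredly continuous, and in particular the dichotomy between dense-in-itself and non-dense-in-itself subspaces; the remaining bookkeeping, including the cardinal estimates, is routine.
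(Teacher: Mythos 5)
Your proof is correct and follows essentially the same route as the paper: the upper bound is the paper's argument (Theorem~\ref{main} gives a cover of size $\le hl(X)$ on whose members the functions are continuous, one restricts to a dense set of size $\le\max\{hl(X),hd(X)\}$ and uses that a continuous injection of a compactum into a Hausdorff space is an embedding), while for the lower bound you merely unpack the Juh\'asz equality $hl(X)=\sup\{|Z|:Z\subset X \mbox{ scattered}\}$ that the paper cites (via right-separated subspaces, which are exactly the scattered ones) and replace the paper's cube $[0,1]^Z$ by the simplex $\K_A$, both of which serve equally well. The only point worth flagging is that your verification that functions supported on a scattered set are scatteredly continuous invokes $T_1$, a hypothesis absent from the statement --- but the paper's own ``easy to check'' claim about the functions in $\K_Z$ needs the same implicit assumption, so this is not a defect of your argument relative to the paper's.
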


\begin{proof} Given a compact convex subset $\K\subset SC_p(X)$,
use Theorem~\ref{main} to find a disjoint cover $\mathcal{C}$ of $X$
of cardinality $|\mathcal{C}|=\Dec(\K)\le hl(X)$  such that for each
$C\in\mathcal{C}$ and $f\in\K$ the restriction $f|C$ is continuous.
Let $Z=\oplus \mathcal{C}$ and $\pi:\oplus \mathcal{C}\to X$ be the
natural projection, which induces a linear topological isomorphism
$\pi^*:\IR^X\to \IR^Z$, $\pi^*:f\mapsto f\circ\pi$, with
$\pi^*(\K)\subset C_p(Z)$. It follows that the topological sum
$Z=\oplus\mathcal{C}$ has density
$d(Z)\le\displaystyle\sum_{C\in\mathcal{C}}d(C)\le|\mathcal{C}|\cdot
hd(X)\le\max\{hl(X),hd(X)\}$, and so we can fix a dense subset
$D\subset Z$ of cardinality $|D|=d(Z)\le\max\{hl(X),hd(X)\}$. Since
the restriction operator $R:C_p(Z)\to C_p(D)$, $R:f\mapsto f|D$, is
injective and continuous, we conclude that
$$
\begin{aligned}
w(\K)&=w(\pi^*(\K))=w(R\circ \pi^*(\K))\le w
(\IR^D)=\\
&=|D|\cdot\aleph_0\le\max\{hl(X),hd(X)\}.
\end{aligned}
$$

Next, we show that $hl(X)\le\tau$ where $$\tau=\sup\{w(\K):\mbox{$\K$ is a compact convex subset of
$SC_p(X)$}\}.$$ Assuming conversely that $hl(X)>\tau$ and using the equality $hl(X)=\sup\{|Z|:Z\subset X$ is scattered$\}$ established in \cite{Juh}, we can find a scattered subspace $Z\subset X$ of cardinality $|Z|>\tau$. It is easy to check that
each function $f:X\to[0,1]$ with $f(X\setminus Z)\subset\{0\}$ is
scatteredly continuous, which implies that the subset
$$\K_Z=\big\{f\in SC_p(X):f(Z)\subset[0,1],\;f(X\setminus
Z)\subset\{0\}\big\}$$ is compact, convex and homeomorphic to the
Tychonoff cube $[0,1]^Z$. Then $\tau\ge w(\K_Z)=w([0,1]^Z)=|Z|>\tau$ and this is a desired contradiction that completes the proof.\end{proof}

Corollaries~\ref{c1} or \ref{c2} imply:

\begin{corollary}\label{c3} For a metrizable separable space $X$, each compact convex subspace $\K\subset SC_p(X)$ is metrizable.
\end{corollary}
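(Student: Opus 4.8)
The plan is to deduce this quickly from Corollary~\ref{c1} (equally well, from Corollary~\ref{c2}), since the real content has already been packaged in Theorem~\ref{main} and the two preceding corollaries. First I would recall that a separable metrizable space is second countable, hence has a countable network, so that $nw(X)=\aleph_0$; moreover $X$ is first countable and thus has countable tightness, so that Theorem~\ref{main} and Corollaries~\ref{c1} and~\ref{c2} are applicable to $X$.

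Next, given a compact convex subspace $\K\subset SC_p(X)$, I would apply Corollary~\ref{c1} to conclude that $nw(\K)\le nw(X)=\aleph_0$. Since $\K$ is a subspace of $\IR^X$, it is Hausdorff, and being compact it satisfies $w(\K)=nw(\K)$, because a network of a compact Hausdorff space can be refined to a base of the same cardinality. Hence $\K$ is a compact Hausdorff space of countable weight, and the Urysohn metrization theorem yields that $\K$ is metrizable.

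Alternatively, one can avoid appealing to the identity $w=nw$ on compacta: since $X$ is second countable we have $hl(X)=hd(X)=\aleph_0$, and Corollary~\ref{c2} then gives directly $w(\K)\le\max\{hl(X),hd(X)\}=\aleph_0$, after which Urysohn's theorem again applies. I do not expect any genuine obstacle here; the only nonroutine ingredients are Theorem~\ref{main} and the corollaries already established, together with the classical fact that a compact Hausdorff space of countable weight (equivalently, of countable network weight) is metrizable.
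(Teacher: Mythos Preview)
Your proposal is correct and matches the paper's approach exactly: the paper simply states that Corollary~\ref{c3} follows from Corollaries~\ref{c1} or~\ref{c2}, and you have supplied precisely the standard details (countable tightness and $nw(X)=hl(X)=hd(X)=\aleph_0$ for separable metrizable $X$, together with $w=nw$ on compacta or a direct appeal to Corollary~\ref{c2}, then Urysohn). There is nothing to add.
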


Finally, let us observe that Corollary~\ref{c1} implies:

\begin{corollary}\label{c4}  If for Tychonoff spaces $X,Y$ with
countable tightness the linear topological spaces $SC_p(X)$ and
$SC_p(Y)$ are topologically isomorphic, then $nw(X)=nw(Y)$.
\end{corollary}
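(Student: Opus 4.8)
\textbf{Proof proposal for Corollary~\ref{c4}.} The plan is to combine Corollary~\ref{c1} with the observation that $\sigma$-convexity is an invariant of the linear topological structure. Let $T\colon SC_p(X)\to SC_p(Y)$ be a topological isomorphism of linear topological spaces. First I would record the key structural fact extracted from Corollary~\ref{c1}: for any Tychonoff space $Z$ of countable tightness we have
$$nw(Z)=\max\{nw(\F):\mbox{$\F$ is a $\sigma$-convex subset of $SC_p(Z)$}\}.$$
So it suffices to show that $T$ induces a bijection between the families of $\sigma$-convex subsets of $SC_p(X)$ and of $SC_p(Y)$ that preserves network weight.

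Next I would verify that $T$ maps $\sigma$-convex sets to $\sigma$-convex sets. This is immediate from the definition: if $\F\subset SC_p(X)$ is $\sigma$-convex and $(y_n)_{n\in\w}$ is a sequence in $T(\F)$ with weights $(t_n)_{n\in\w}$ summing to $1$, then writing $y_n=T(x_n)$ with $x_n\in\F$, the series $\sum_n t_n x_n$ converges in $SC_p(X)$ to a point $c\in\F$ by $\sigma$-convexity of $\F$; since $T$ is a continuous \emph{linear} map, it commutes with convergent series of this form, so $\sum_n t_n y_n=T(c)\in T(\F)$. Hence $T(\F)$ is $\sigma$-convex in $SC_p(Y)$, and the same argument applied to $T^{-1}$ shows that every $\sigma$-convex subset of $SC_p(Y)$ is the $T$-image of a $\sigma$-convex subset of $SC_p(X)$.

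Finally, since $T$ restricts to a homeomorphism of $\F$ onto $T(\F)$, network weight is preserved: $nw(\F)=nw(T(\F))$. Therefore
$$nw(X)=\max_{\F}nw(\F)=\max_{\F}nw(T(\F))=\max_{\mathcal G}nw(\mathcal G)=nw(Y),$$
where $\F$ ranges over $\sigma$-convex subsets of $SC_p(X)$ and $\mathcal G=T(\F)$ ranges over all $\sigma$-convex subsets of $SC_p(Y)$. I do not expect any real obstacle here; the only point requiring a line of care is that a continuous linear map carries a convergent series $\sum_n t_n x_n$ to $\sum_n t_n T(x_n)$, which follows by applying continuity and linearity to the partial sums, together with the fact that $SC_p(Y)$, being a subspace of $\IR^Y$, is Hausdorff so limits are unique.
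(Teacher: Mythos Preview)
Your proposal is correct and follows exactly the approach the paper intends: the paper simply states that Corollary~\ref{c1} implies Corollary~\ref{c4} without further argument, and you have spelled out precisely that implication by observing that a linear topological isomorphism carries $\sigma$-convex sets to $\sigma$-convex sets and preserves network weight. There is nothing to add.
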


\section{Weakly discontinuous families of functions}

In this section we shall generalize the notions of scattered
continuity and weak discontinuity to function families.

A family of
functions $\F\subset Y^X$ from a topological space $X$ to a
topological space $Y$ is called
\begin{itemize}
\item {\em scatteredly continuous} if each non-empty subset
$A\subset X$ contains a point $a\in A$ at which each function
$f|A:A\to Y$, $f\in\F$ is continuous;
\item {\em weakly discontinuous}  if each subset $A\subset X$ contains
an open dense subspace $U\subset A$ such that each function
$f|U:U\to Y$, $f\in\F$ is continuous.
\end{itemize}

The following simple characterization can be derived from the
corresponding definitions and Theorem~4.4 of \cite{BB} (saying that
each scatteredly continuous function with values in a regular
topological space is weakly discontinuous).

\begin{proposition}\label{p3} A function family $\F\subset Y^X$
is scatteredly continuous (resp. weakly discontinuous) if and only
if so is the function $\Delta\F:X\to Y^\F$, $\Delta\F:x\mapsto(f(x))_{f\in\F}$.
Consequently, for a regular topological space $Y$, a function family
$\F\subset Y^X$ is scatteredly continuous if and only if it is weakly discontinuous.
\end{proposition}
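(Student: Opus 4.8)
The plan is to unwind the definitions on both sides and observe that continuity at a point of $\Delta\F$ is literally the conjunction of the continuity at that point of all the coordinate functions $f\in\F$. First I would recall the standard fact that a map $g:A\to Y^\F$ into a product is continuous at a point $a$ if and only if each coordinate $\mathrm{pr}_f\circ g:A\to Y$ is continuous at $a$; this holds pointwise (not just globally), and it applies verbatim with $g=(\Delta\F)|A$, whose $f$-th coordinate is $(f|A)$. Granting this, for any non-empty $A\subset X$ a point $a\in A$ is a point of continuity of $(\Delta\F)|A$ exactly when it is a simultaneous point of continuity of all the restrictions $f|A$, $f\in\F$. Comparing with the definition of a scatteredly continuous family, we get that $\F$ is scatteredly continuous if and only if $\Delta\F$ is scatteredly continuous as a single function into $Y^\F$.

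The argument for weak discontinuity is entirely parallel. If $U\subset A$ is open dense in $A$ and every $f|U$, $f\in\F$, is continuous, then by the product characterization $(\Delta\F)|U$ is continuous; conversely if $(\Delta\F)|U$ is continuous then so is each coordinate $f|U=\mathrm{pr}_f\circ(\Delta\F)|U$. Hence $\F$ is weakly discontinuous if and only if $\Delta\F$ is weakly discontinuous. For the last sentence, I would invoke Theorem 4.4 of \cite{BB}: since $Y$ is regular, the product $Y^\F$ is also regular, so the single function $\Delta\F:X\to Y^\F$ is scatteredly continuous if and only if it is weakly discontinuous. Combining this equivalence with the two ``if and only if'' statements just proved yields that the family $\F$ is scatteredly continuous if and only if it is weakly discontinuous.

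I do not expect a genuine obstacle here; the statement is essentially a bookkeeping exercise. The only point that needs a line of care is the pointwise version of the product-continuity criterion — one must check that it is continuity of each $f|A$ \emph{at the same point $a$} that corresponds to continuity of $(\Delta\F)|A$ at $a$, which is immediate from the definition of the product topology via subbasic open sets $\mathrm{pr}_f^{-1}(V)$. A second minor point is that regularity passes to arbitrary products (Tychonoff powers), which is standard, so the appeal to \cite[4.4]{BB} is legitimate even when $\F$ is infinite.
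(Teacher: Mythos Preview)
Your proposal is correct and follows essentially the same approach as the paper, which merely remarks that the proposition ``can be derived from the corresponding definitions and Theorem~4.4 of \cite{BB}'' without spelling out details. You have filled in precisely those details---the pointwise product-continuity criterion and the fact that regularity passes to powers so that \cite[4.4]{BB} applies to $\Delta\F$---so there is nothing to add.
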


Propositions 4.7 and 4.8 \cite{BB} imply that each weakly discontinuous function $f:X\to Y$ has decomposition number $\dec(f)\le hl(X)$. This fact combined with Proposition~\ref{p3} yields:

\begin{corollary}\label{c5} For any topological spaces $X,Y$,
each weakly discontinuous function family $\F\subset Y^X$ has
decomposition number $\Dec(\F)\le hl(X)$.
\end{corollary}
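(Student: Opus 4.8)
The plan is to deduce this from the single-function case by passing to the diagonal map. Given a weakly discontinuous family $\F\subset Y^X$, I would consider the function $\Delta\F:X\to Y^\F$, $\Delta\F:x\mapsto(f(x))_{f\in\F}$, where $Y^\F$ carries the Tychonoff product topology.

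First, by Proposition~\ref{p3}, weak discontinuity of the family $\F$ is equivalent to weak discontinuity of the single function $\Delta\F$. Second, the quoted consequence of Propositions~4.7 and~4.8 of~\cite{BB} --- that every weakly discontinuous function $g:X\to Z$ satisfies $\dec(g)\le hl(X)$ --- applies to $g=\Delta\F$ and produces a cover $\mathcal C$ of $X$ with $|\mathcal C|\le hl(X)$ such that the restriction $(\Delta\F)|C$ is continuous for every $C\in\mathcal C$. Third, I would note that this same cover witnesses $\Dec(\F)\le|\mathcal C|$: for a subspace $C\subset X$ the map $(\Delta\F)|C:C\to Y^\F$ is continuous if and only if its composition $\mathrm{pr}_f\circ(\Delta\F)|C$ with each coordinate projection $\mathrm{pr}_f:Y^\F\to Y$ is continuous, and $\mathrm{pr}_f\circ(\Delta\F)|C=f|C$; hence $(\Delta\F)|C$ is continuous precisely when all the restrictions $f|C$, $f\in\F$, are continuous. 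This shows $\dec(\Delta\F)=\Dec(\F)$, and the chain $\Dec(\F)=\dec(\Delta\F)\le hl(X)$ completes the argument.

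Each of the three steps is a direct invocation of an already available fact (Proposition~\ref{p3}, the cited Propositions~4.7 and~4.8 of~\cite{BB}) or of the universal property of the product topology, so there is essentially no genuine obstacle to overcome. The only point deserving a moment's care is the last identification $\dec(\Delta\F)=\Dec(\F)$, where one uses that continuity of a map into a product is equivalent to coordinatewise continuity --- a fact that holds for products over an arbitrary, possibly large, index set such as $\F$ itself.
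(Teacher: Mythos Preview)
Your proposal is correct and follows exactly the route the paper indicates: reduce to the single-function case via the diagonal map $\Delta\F$ (Proposition~\ref{p3}), then apply the bound $\dec(\Delta\F)\le hl(X)$ from Propositions~4.7 and~4.8 of~\cite{BB}. The only addition you make is spelling out the identification $\Dec(\F)=\dec(\Delta\F)$ through coordinatewise continuity in the product, which the paper leaves implicit.
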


\section{Weak discontinuity of $\sigma$-convex sets in function spaces}

For a topological space $X$ by $SC_p^*(X)$ we denote the space of
all {\em bounded} scatteredly continuous real-valued functions on
$X$. It is a subspace of the function space $SC_p(X)\subset \IR^X$.
Each function $f\in SC_p^*(X)$ has finite norm
$\|f\|=\displaystyle\sup_{x\in X}|f(x)|$.

\begin{theorem}\label{wd} For any topological space $X$ with
countable tightness, each $\sigma$-convex subset $\F\subset SC^*_p(X)$ is weakly discontinuous.
\end{theorem}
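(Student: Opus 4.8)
I would reduce the theorem to the following assertion $(\ast)$, to be established by contradiction: \emph{every nonempty space $Z$ of countable tightness carrying a $\sigma$-convex family $\mathcal G\subseteq SC_p^*(Z)$ has a point at which all members of $\mathcal G$ are continuous.} Granting $(\ast)$, fix a nonempty $A\subseteq X$. Every $f|A$ with $f\in\F$ is bounded and scatteredly continuous, and the restriction operator $R_A\colon SC_p^*(X)\to SC_p^*(A)$, $f\mapsto f|A$, is linear and continuous for the topologies of pointwise convergence, so $R_A(\F)=\{f|A:f\in\F\}$ is a $\sigma$-convex subset of $SC_p^*(A)$; also $t(A)\le t(X)\le\aleph_0$. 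Applying $(\ast)$ to the family $\{f|A:f\in\F\}$ on $A$ we obtain a point of $A$ at which all functions $f|A$, $f\in\F$, are continuous. Thus $\F$ is scatteredly continuous as a family, and since $\IR$ is regular, Proposition~\ref{p3} shows that $\F$ is weakly discontinuous.

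To prove $(\ast)$, suppose $\mathcal G\subseteq SC_p^*(Z)$ has no common point of continuity. Then $Z=\bigcup_{g\in\mathcal G}D(g)$, where $D(g)$ denotes the discontinuity set of $g$, and $Z$ has no isolated points, since an isolated point would be a common continuity point. Each $g\in\mathcal G$, being scatteredly continuous into the regular space $\IR$, is weakly discontinuous, and consequently $\overline{D(g)}$ is nowhere dense in $Z$: a nonempty open $W\subseteq\overline{D(g)}$ would, by weak discontinuity of $g$, contain a nonempty open subset on which $g$ is continuous, contradicting the density of $D(g)$ in $W$. As the closed nowhere dense subsets of $Z$ form an ideal, no finite subfamily of $\mathcal G$ covers the nonempty crowded space $Z$: the cover $\{\overline{D(g)}:g\in\mathcal G\}$ is genuinely infinite, and the real task is to turn this, with the help of countable tightness and $\sigma$-convexity, into a contradiction.

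Countable tightness enters through the remark that if $g$ is discontinuous at a point $z$, then for some $\varepsilon>0$ the set $\{y:|g(y)-g(z)|>\varepsilon\}$ has $z$ in its closure and hence, by $t(Z)\le\aleph_0$, contains a countable subset whose closure still contains $z$. Exploiting this, I would construct by recursion along the tree $\omega^{<\omega}$: points $z_s\in Z$ with $z_s\in\overline{\{z_{s^\frown k}:k\in\omega\}}$, so that the countable set $S=\{z_s:s\in\omega^{<\omega}\}$ has no isolated points; functions $g_s\in\mathcal G$ discontinuous at $z_s$; and positive reals $\delta_s$ with $|g_s(z_{s^\frown k})-g_s(z_s)|>\delta_s$ for every $k$ --- choosing each $z_{s^\frown k}$ inside ever smaller neighbourhoods of $z_s$ and inside the dense open complement of the closures of the discontinuity sets of the functions named so far. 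Simultaneously one would run a fusion selecting positive reals $t_s$ with $\sum_s t_s=1$ that decay fast enough, relative to the (finite, since we are in $SC_p^*(Z)$) norms of the functions already named, that the total contribution of all summands other than $g_s$ to the increment $h(z_{s^\frown k})-h(z_s)$ stays below $\tfrac12\delta_s t_s$. Then $h:=\sum_s t_s g_s$ belongs to $\mathcal G$ by $\sigma$-convexity, so $h$ is scatteredly continuous; yet by construction $h|S$ has oscillation at least $\tfrac12\delta_s t_s>0$ at every point $z_s$ of the crowded space $S$, so $h|S$ has no point of continuity --- contradicting scattered continuity of $h$, which proves $(\ast)$.

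The reductions above are routine; the main obstacle, I expect, is the recursive construction just described, the delicate point being to choose the $z_s$, the $g_s$ and the weights $t_s$ so compatibly that the infinitely many summands of $h$ cannot conspire to cancel the jumps one has manufactured --- in particular, to keep the witnessing points away from all earlier discontinuity sets while still detecting the jump of the current witness function. The phenomenon to be ruled out is illustrated by the characteristic functions $\delta_{q_n}$ of the points of $\mathbb Q=\{q_n:n\in\omega\}$, which are scatteredly continuous with $\bigcup_nD(\delta_{q_n})=\mathbb Q$, while every series $\sum_nt_n\delta_{q_n}$ ($t_n>0$, $\sum_nt_n=1$) is nowhere continuous on $\mathbb Q$ --- yet such a family lies in no $\sigma$-convex subset of $SC_p^*(\mathbb Q)$, and it is precisely this constraint, imposed by $\sigma$-convexity inside $SC_p^*(Z)$, that the construction has to exploit.
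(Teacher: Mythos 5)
Your outer reductions are fine, and your broad strategy --- use $\sigma$-convexity to form a convex series $h=\sum_s t_sg_s$ that lies in the family yet is everywhere discontinuous on a countable crowded set, contradicting the scattered continuity of $h$ --- is exactly the paper's strategy. But the proposal stops where the difficulty begins: the recursive construction is only described, and the mechanism you sketch for preventing cancellation does not work as stated. You want to make the jump of the single function $g_s$ at $z_s$ visible in $h$ by choosing the witness points $z_{s^\frown k}$ inside the jump set $\{y:|g_s(y)-g_s(z_s)|>\delta_s\}$ \emph{and} inside the complement of the closures of the discontinuity sets of all previously named functions. Countable tightness only hands you a countable subset $C$ of the jump set with $z_s\in\overline C$; nothing prevents $C$ (indeed the whole jump set near $z_s$) from being contained in $\overline{D(g_{s'})}$ for an earlier $g_{s'}$, in which case no admissible $z_{s^\frown k}$ exists. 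Moreover, even when $z_{s^\frown k}\notin\overline{D(g_{s'})}$, continuity of $g_{s'}$ at $z_{s^\frown k}$ by itself gives no bound on $|g_{s'}(z_{s^\frown k})-g_{s'}(z_s)|$; you would also need $g_{s'}$ continuous at $z_s$ and $z_{s^\frown k}$ in a neighbourhood chosen after $g_{s'}$ is named, which your tree bookkeeping does not guarantee for nodes processed between $z_s$ and its children. So the step you yourself flag as ``the main obstacle'' is a genuine obstacle, and it is not overcome.

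The paper resolves this with a different and simpler device that your plan lacks. First, countable tightness is spent once, via Proposition~2.3 of \cite{BB}, to reduce to a single countable set $Q=\{x_n\}_{n\ge1}$ on which $\Delta\F$ is assumed nowhere continuous; no tree and no hand-built crowded set are needed. Second, and crucially, one does \emph{not} try to isolate the jump of an individual summand: one arranges inductively that the \emph{partial sum} $s_n=\sum_{k\le n}t_kf_k$ restricted to $Q$ is discontinuous at $x_n$. The inductive step is the key trick: pick a tentative small $\tilde t_{n+1}$; if $(s_n+\tilde t_{n+1}f_{n+1})|Q$ is still discontinuous at $x_{n+1}$, keep $t_{n+1}=\tilde t_{n+1}$; if it has become continuous there, set $t_{n+1}=\tfrac12\tilde t_{n+1}$, and then $s_{n+1}=(s_n+\tilde t_{n+1}f_{n+1})-\tfrac12\tilde t_{n+1}f_{n+1}$ is a function continuous at $x_{n+1}$ minus one discontinuous there, hence discontinuous at $x_{n+1}$. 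After that only the tail $\sum_{k>n}t_kf_k$ must be controlled, and this is done purely by norm estimates ($t_{k+1}\|f_{k+1}\|\le\tfrac12t_k\|f_k\|$ and $t_{n+1}\|f_{n+1}\|\le\tfrac18\osc_{x_n}(s_n|Q)$), with no constraint whatsoever on where the later functions are discontinuous. This is the missing idea: once you track discontinuity of partial sums rather than of individual summands, your $\delta_{q_n}$-on-$\mathbb Q$ worry disappears, because the witness points never need to dodge anyone's discontinuity set.
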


\begin{proof} By Proposition~\ref{p3}, the weak discontinuity of
the function family $\F$ is equivalent to the scattered continuity
of the function $\Delta\F:X\to\IR^\F$, $\Delta\F:x\mapsto(f(x))_{f\in\F}$.
Since the space $X$ has countable tightness, the scattered
continuity of $\Delta\F$ will follow from Proposition~2.3 of
\cite{BB} as soon as we check that  for each countable subset
$Q=\{x_n\}_{n=1}^\infty\subset X$ the restriction
$\Delta\F|Q:Q\to\IR^\F$ has a continuity point. Assuming the
converse, for each point $x_n\in Q$ we can choose a function $f_n\in
\F$ such that the restriction $f_n|Q$ is discontinuous at $x_n$.

Observe that a function $f:Q\to\IR$ is discontinuous at a point
$q\in Q$ if and only if it has strictly positive oscillation
$$\osc_q(f)=\inf_{O_q}\sup\{|f(x)-f(y)|:x,y\in O_q\}$$at the point $q$.
In this definition the infimum is taken over all neighborhoods $O_q$
of $q$ in $Q$.

We shall inductively construct a sequence $(t_n)_{n=1}^\infty$ of
positive real numbers such that for every $n\in\IN$ the following
conditions are satisfied:
\begin{enumerate}
\item[1)] $t_1\le \frac12$, $t_{n+1}\le \frac12t_n$, and $t_{n+1}\cdot\|f_{n+1}\|\le \frac12t_n\cdot\|f_n\|$,
\item[2)] the function $s_n=\displaystyle\sum_{k=1}^nt_kf_k$ restricted to $Q$ is discontinuous at $x_n$,
\item[3)] $t_{n+1}\cdot\|f_{n+1}\|\le\frac18\osc_{x_n}(s_n|Q)$.
\end{enumerate}

We start the inductive construction letting $t_1=1/2$. Then the
function $s_1|Q=t_1\cdot f_1|Q$ is discontinuous at $x_1$ by the
choice of the function $f_1$. Now assume that for some $n\in\IN$
positive numbers $t_1\dots,t_n$ has been chosen so that the function
$s_n=\displaystyle\sum_{k=1}^nt_kf_k$ restricted to $Q$ is
discontinuous at $x_n$.

Choose any positive number $\tilde t_{n+1}$ such that
$$\tilde t_{n+1}\le \frac12t_n,\;\;\tilde t_{n+1}\cdot\|f_{n+1}\|\le\tfrac
12t_n\cdot\|f_{n}\|\mbox{ \ and \ }  \tilde
t_{n+1}\cdot\|f_{n+1}\|\le\tfrac18\osc_{x_n}(s_n|Q),$$ and consider
the function $\tilde s_{n+1}=s_n+\tilde t_{n+1}f_{n+1}$. If the
restriction of this function to $Q$ is discontinuous at the point
$x_{n+1}$, then put $t_{n+1}=\tilde t_{n+1}$ and finish the
inductive step. If $\tilde s_{n+1}|Q$ is continuous at $x_{n+1}$,
then put $t_{n+1}=\frac12\tilde t_{n+1}$ and observe that the
restriction of the function
$$s_{n+1}=\displaystyle\sum_{k=1}^{n+1} t_kf_k=s_n+\tfrac12\tilde t_{n+1}f_{n+1}=
\tilde s_{n+1}-\tfrac12\tilde t_{n+1}f_{n+1}$$ to $Q$ is discontinuous
at $x_{n+1}$. This completes the inductive construction.
\smallskip

The condition (1) guarantees that  $\displaystyle\sum_{n=1}^\infty
t_n\le  1$ and hence the number
$t_0=1-\displaystyle\sum_{n=1}^\infty t_n$ is non-negative. Now take
any function $f_0\in\F$ and consider the function
$$s=\displaystyle\sum_{n=0}^\infty t_nf_n$$ which is well-defined and belongs to
$\F$ by the $\sigma$-convexity of $\F$.

The functions $f_0,s\in\F\subset SC_p(X)$ are weakly discontinuous
and hence for some open dense subset $U\subset Q$ the restrictions
$s|U$ and $f_0|U$ are continuous. Pick any point $x_n\in U$. Observe
that $$s=t_0f_0+s_n+\displaystyle\sum_{k=n+1}^\infty t_{k}f_k$$and
hence
$$s_n=s-t_0f_0-\displaystyle\sum_{k=n+1}^\infty t_kf_k=s-t_0f_0-u_n,$$
where $u_n=\displaystyle\sum_{k=n+1}^\infty t_kf_k$. The conditions
(1) and (3) of the inductive construction guarantee that the
function $u_n$
 has norm
$$\|u_n\|\le\displaystyle\sum_{k=n+1}^\infty t_k\|f_k\|\le 2 t_{n+1}\|f_{n+1}\|\le
\frac14\osc_{x_n}(s_n|Q).$$
Since $s_n=s-t_0f_0-u_n$, the triangle inequality implies that
$$0<\osc_{x_n}(s_n|Q)\le \osc_{x_n}(s|Q)+\osc_{x_n}(t_0f_0|Q)+
\osc_{x_n}(u_n)\le$$ $$\le 0+0+2\|u_n\|\le\frac12\osc_{x_n}(s_n|Q)$$
which is a desired contradiction, which shows that the restriction
$\Delta\F|Q$ has a point of continuity and the family $\F$ is weakly
discontinuous.
\end{proof}

\section{Proof of Theorem~\ref{main}}\label{s:pf-t2}

Let $X$ be a topological space with countable tightness and $\F$ be
a $\sigma$-convex subset in the function space $SC_p(X)$. The
$\sigma$-convexity of $\F$ implies that for each point $x\in X$ the
subset $\{f(x):f\in\F\}\subset\IR$ is bounded (in the opposite case
we could find sequences $(f_n)_{n\in\w}\in\F^\w$ and
$(t_n)_{n\in\w}\in[0,1]^\w$ with $\displaystyle\sum_{n=0}^\infty
t_n=1$ such that the series $\displaystyle\sum_{n=1}^\infty
t_nf_n(x)$ is divergent). Then $X=\displaystyle\bigcup_{n=1}^\infty
X_n$ where $X_n=\{x\in X:n\le \displaystyle\sup_{f\in\F}|f(x)|<n+1\}$  for $n\in\w$.

It follows that for every $n\in\w$ the family $\F|X_n=\{f|X_n:f\in
\F\}$ is a $\sigma$-convex subset of the function space
$SC_p^*(X_n)$. By Theorem~\ref{wd}, the function family $\F|X_n$ is
weakly discontinuous and by Corollary~\ref{c5}, $\Dec(\F|X_n)\le
hl(X_n)$. Then $\Dec(\F)\le\displaystyle\sum_{n=0}^\infty
\Dec(\F|X_n)\le\displaystyle\sum_{n=0}^\infty hl(X_n)\le hl(X)$.

\section{Some Open Problems}

The presence of the condition of countable tightness in
Theorem~\ref{main} and its corollaries suggests the following open
problem.

\begin{problem} Is it true $w(\K)\le nw(X)$ for each topological
space $X$ and each compact convex subset $\K\subset SC_p(X)$?
\end{problem}

By Theorem~\ref{wd}, for each topological space $X$ of countable tightness, each compact convex subset $\K\subset SC_p^*(X)$ is weakly discontinuous.

\begin{problem} For which topological spaces $X$ each compact
convex subset $\K\subset SC_p(X)$ is weakly discontinuous?
\end{problem}

According to Corollary~\ref{c3}, each compact convex subset $\K\subset SC_p(\w^\w)$ is metrizable.

\begin{problem} Is a compact subset $\K\subset SC_p(\w^\w)$ metrizable if $K$ is  homeomorphic to a compact convex subset of $\IR^{\mathfrak c}$.
\end{problem}

Let us recall that a topological space $K$ is {\em Rosenthal compact} if $K$ is homeomorphic to a compact subspace of the space $\mathcal B_1(X)\subset\IR^X$ of functions of the first Baire class on a Polish space $X$. In this definition the space $X$ can be assumed to be equal to the space $\w^\w$ of irrationals.

\begin{problem}\label{pr4} Is each Rosenthal compact space homeomorphic to a compact subset of the function space $SC_p(\w^\w)$?
\end{problem}

This problem has affirmative solution in the realm of zero-dimensional separable Rosenthal compacta.

\begin{theorem} Each zero-dimensional separable Rosenthal compact space $K$ is homeomorphic to a compact subset of the function space $SC_p(\w^\w)$.
\end{theorem}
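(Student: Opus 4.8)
The plan is to realize a zero-dimensional separable Rosenthal compact space $K$ inside $SC_p(\w^\w)$ by exploiting the standard structure theory of such compacta together with the characterization of scattered continuity via weak discontinuity. First I would use separability and zero-dimensionality of $K$ to represent $K$ as a closed subspace of $\{0,1\}^\w$, so that each point of $K$ is coded by a sequence $(a_i)_{i\in\w}$ and the $i$-th coordinate projection $\pi_i:K\to\{0,1\}$ is continuous. The coordinate functions $\pi_i$ separate points of $K$, and it suffices to construct a single scatteredly continuous function $F:\w^\w\to C(K,\IR)$ (equivalently, by Proposition~\ref{p3}, a scatteredly continuous family) such that the induced evaluation map $K\to SC_p(\w^\w)$, $x\mapsto(F(y)(x))_{y\in\w^\w}$, is a topological embedding whose image is compact. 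Here I am reversing the usual roles: the elements of $\w^\w$ index the functions, and $K$ sits inside $SC_p(\w^\w)$ as the set of evaluations.

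The heart of the matter is the Rosenthal condition: each $x\in K\subset\mathcal B_1(\w^\w)$ is a pointwise limit of a sequence of continuous functions on $\w^\w$, and by the Bourgain--Fremlin--Talagrand theory (and Rosenthal's original work) such $K$ carries a rich supply of convergent sequences; moreover, since $K$ is zero-dimensional and separable, one can encode the topology of $K$ using countably many clopen sets, and each clopen subset of $K$, viewed as a $\{0,1\}$-valued function on $K$, is a function of the first Baire class. The key step I would carry out is to show that for any first-Baire-class function $g:\w^\w\to\IR$ there is a countable ordinal partition of $\w^\w$ into pieces on which $g$ is continuous; more precisely, $g$ is weakly discontinuous since $\w^\w$ is Polish and hence hereditarily Lindelöf, so $\dec(g)\le hl(\w^\w)=\aleph_0$ by the results cited after Proposition~\ref{p3}. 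Thus every $g\in\mathcal B_1(\w^\w)$ is already scatteredly continuous, i.e. $\mathcal B_1(\w^\w)\subset SC_p(\w^\w)$ \emph{as a set}; the real content is that a \emph{compact} subset of $\mathcal B_1(\w^\w)$ need not be bounded in $SC_p(\w^\w)$ in a way compatible with scattered continuity of the indexing family, so one must produce a \emph{uniformly} scatteredly continuous realization. This is where separability and zero-dimensionality are used: I would choose a countable dense subset $\{y_n\}\subset\w^\w$ and a countable clopen base, transfer the compact space $K\subset\{0,1\}^\w$ via a fixed sequence of Baire-one functions $g_i:\w^\w\to\{0,1\}$ with $g_i\to \pi_i$ pointwise in an appropriately controlled manner, and check that the family $\{g_i\}$ is weakly discontinuous as a family (not merely termwise), again invoking countable tightness of $\w^\w$ and Theorem~\ref{wd}-style oscillation bookkeeping.

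The main obstacle I anticipate is precisely passing from \emph{pointwise} first Baire class membership (which gives scattered continuity of each individual function) to \emph{joint} weak discontinuity of the whole family $\{g_i\}_{i\in\w}$, equivalently scattered continuity of the diagonal map $\Delta:\w^\w\to\{0,1\}^\w$, since an arbitrary countable family of Baire-one functions certainly need not have a continuous diagonal. Here I expect one must use that the diagonal lands in the \emph{compact} set $K$: the relative topology on $K$ is metrizable (it is separable and zero-dimensional, hence second countable), so $\Delta:\w^\w\to K$ is a Baire-one map into a Polish space, and such maps \emph{are} weakly discontinuous — this is essentially the statement that a Baire-one function into a metric space has a dense $G_\delta$ of continuity points on every closed subspace, applied hereditarily. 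Once $\Delta$ is known to be scatteredly continuous, Proposition~\ref{p3} gives that $\{g_i\}$ is a scatteredly continuous family, each $g_i\in SC_p(\w^\w)$, the evaluation map $e:K\to SC_p(\w^\w)$, $e(x)=(g_i(x))_i$ lifted to all of $\w^\w$ via a chosen countable separating subfamily is continuous and injective on the compact $K$ (since the $g_i$ separate points of $K$, being the preimages of a point-separating family of clopen sets), hence an embedding, and its image is the desired compact subset of $SC_p(\w^\w)$. The remaining bookkeeping — arranging the $g_i$ so that the series/coordinate encoding of $K$ in $\{0,1\}^\w$ is faithfully reproduced, and verifying that the topology of pointwise convergence on $\{g_i\}$ restricted to $e(K)$ agrees with the original topology of $K$ — is routine and parallels the argument in the proof of Theorem~\ref{wd}.
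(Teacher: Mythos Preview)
Your proposal contains a fatal error that appears twice: you assert that a separable zero-dimensional compact space $K$ embeds as a closed subspace of $\{0,1\}^\w$, and later that ``the relative topology on $K$ is metrizable (it is separable and zero-dimensional, hence second countable).'' This is false. The split interval (double arrow space) is a separable, zero-dimensional, first-countable Rosenthal compact space that is \emph{not} metrizable; indeed, non-metrizable examples are precisely the interesting case of the theorem, since a metrizable $K$ embeds trivially into $C_p(\w^\w)\subset SC_p(\w^\w)$. Once the embedding $K\hookrightarrow\{0,1\}^\w$ is lost, your entire mechanism collapses: there is no countable family $\{\pi_i\}$ of clopen coordinate maps to transport, the diagonal map $\Delta:\w^\w\to K$ you want to analyze has no Polish target to land in, and the Baire-one-into-metric-space argument for weak discontinuity of $\Delta$ does not apply. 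A secondary issue is your claim that $\mathcal B_1(\w^\w)\subset SC_p(\w^\w)$ ``as a set''; this is not true for real-valued Baire-one functions in general, and the paper's proof crucially uses that the functions in question take values in the \emph{discrete} set $\{0,1\}$ to invoke the relevant result from \cite{BB}.

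The paper's argument avoids all of this by working on the \emph{function side} rather than the point side. It takes a countable dense $D\subset K$, equips $A=C(K,2)$ with the (non-compact, non-Polish) topology inherited from $2^D$, and invokes Godefroy's characterization of separable Rosenthal compacta to conclude that $A$ is \emph{analytic}, hence a continuous image of $\w^\w$ via some $\pi$. The natural evaluation $\delta:K\to 2^A$, $x\mapsto(f(x))_{f\in A}$, is a topological embedding by zero-dimensionality (clopen sets separate points), and composing with $\pi^*:2^A\to 2^{\w^\w}$ gives the desired embedding. The scattered continuity of each $\pi^*\circ\delta(x)$ is then obtained pointwise: by the Fr\'echet--Urysohn property of Rosenthal compacta one finds $x_n\in D$ with $x_n\to x$, so $\delta_x=\lim_n\delta_{x_n}$ is Baire-one on $A$ (each $\delta_{x_n}$ is continuous by definition of the topology on $A$), hence $\delta_x\circ\pi$ is a Baire-one $\{0,1\}$-valued function on the Polish space $\w^\w$, which is scatteredly continuous. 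No joint/family weak discontinuity is needed.
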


\begin{proof} Let $D\subset K$ be a countable dense subset in $K$. Let $A=C_D(K,2)$ be the space of continuous functions $f:K\to 2=\{0,1\}$ endowed with the smallest topology making the restriction operator $R:C_D(K,2)\to 2^D$, $R:f\mapsto f|D$, continuous. By the characterization of separable Rosenthal compacta \cite{God}, the space $A$ is analytic, i.e., $A$ is the image of the Polish space $X=\w^\w$  under a continuous map $\pi:X\to A$. Now consider the map $\delta:K\to 2^A$, $\delta:x\mapsto (f(x))_{f\in A}$. This map is continuous and injective by the zero-dimensionality of $K$. The map $\pi:X\to A$ induces a homeomorphism $\pi^*:2^A\to 2^{X}$, $\pi^*:f\mapsto f\circ\pi$. Then $\pi^*\circ\delta:K\to 2^{X}$ is a topological embedding.

We claim that $\pi^*\circ\delta(K)\subset SC_p(X)\cap 2^X$. Given a point $x\in K$, we need to check that the function $\pi^*\circ \delta(x)\in 2^X$ is scatteredly continuous. It will be convenient to denote the function $\delta(x)\in 2^A$ by $\delta_x$. This function assigns to each $f\in A=C_D(K)$ the number $\delta_x(f)=f(x)\in 2$.

By \cite{Ros,BFT}, the Rosenthal compact space $K$ is
Fr\'echet-Urysohn, so there is a sequence $(x_n)_{n\in\w}\in D^\w$
with $\displaystyle\lim_{n\to\infty}x_n=x$. Then the function
$\delta_x:A\to 2$, $\delta_x:f\mapsto f(x)$, is the pointwise limit
of the continuous functions $\delta_{x_n}$, which implies that
$\delta_x$ is a function of the first Baire class on $A$ and
$\delta_x\circ \pi:X\to 2$ is a function of the first Baire class on
the Polish space $X$. Since this function has discrete range, it is
scatteredly continuous by Theorem 8.1 of \cite{BB}. Consequently,
$\pi^*\circ\delta(x)\in SC_p(X)$ and $K$ is homeomorphic to the
compact subset $\pi^*\circ\delta(K)\subset SC_p(X)$.
\end{proof}

A particularly interesting instance of Problem~\ref{pr4} concerns non-metrizable convex Rosenthal compacta. One of the simples spaces of this sort is the Helly space. We recall that the {\em Helly space} is the subspace of $B_1(I)$ consisting of all non-decreasing functions $f:I\to I$ of the unit interval $I=[0,1]$.

\begin{problem} Is the Helly space homeomorphic to a compact subset of the function space $SC_p(\w^\w)$?
\end{problem}


\end{document}